\DeclareMathOperator*{\argmin}{arg\!\min}
\DeclareMathOperator*{\argmax}{arg\!\max}
\begin{document}
\title{Re-Weighted $\ell_{1}$ Algorithms within the Lagrange Duality Framework}
\subtitle{Bringing Interpretability to Weights}

%
\author{Mat\'ias Vald\'es\inst{1} \and Marcelo Fiori\inst{1}}
\authorrunning{M. Vald\'es \and M. Fiori}
%
\institute{Instituto de Matem\'atica y Estad\'istica Rafael Laguardia\\Facultad de Ingenier\'ia, Universidad de la Rep\'ublica, Uruguay\\
\email{\{mvaldes,mfiori\}@fing.edu.uy}}
\maketitle              
\begin{abstract} 

We consider an important problem in signal processing, which consists in finding the sparsest solution of a linear system $\Phi x=b$. This problem has applications in several areas, but is NP-hard in general. Usually an alternative convex problem is considered, based on minimizing the (weighted) $\ell_{1}$ norm. For this alternative to be useful, weights should be chosen as to obtain a solution of the original NP-hard problem. A well known algorithm for this is the Re-Weighted $\ell_{1}$, proposed by Cand\`es, Wakin and Boyd. In this article we introduce a new methodology for updating the weights of a Re-Weighted $\ell_{1}$ algorithm, based on identifying these weights as Lagrange multipliers. This is then translated into an algorithm with performance comparable to the usual methodology, but allowing an interpretation of the weights as Lagrange multipliers. The methodology may also be used for a noisy linear system, obtaining in this case a Re-Weighted LASSO algorithm, with a promising performance according to the experimental results.

\keywords{Sparsity \and Weighted $\ell_{1}$ \and Lagrange multiplier \and Duality \and Compressed Sensing \and Sparse Coding \and LASSO \and Subgradient}
\end{abstract}

\section{Introduction}

An important problem in signal processing, particularly in the field of compressed sensing and sparse coding, is to find the ``sparsest'' solution of a linear system $\Phi x =b$; being $\Phi \in \mathbb{R}^{m \times n}$, $m<n$. That is: a solution with as many null coordinates as possible. This problem has applications in several areas like  \cite{qaisar_2013}: medical imaging, error correcting, digital cameras and wireless communication. Sparsity may be measured by the $\ell_{0}$ pseudo-norm $||x||_{0}$, which counts the number of non zero coordinates. The problem of interest is then: \begin{equation} \label{eq:compressed_sensing} \tag{$P_{0}$}
\argmin_{
\begin{array}{ll}
	\Phi x=b \\
	x \in \mathbb{R}^{n}
\end{array}
} \:\: ||x||_{0}.
\end{equation}

This problem is NP-hard in general \cite{foucart_2013}. A usual alternative is to replace the $\ell_{0}$ pseudo-norm by a weighted $\ell_{1}$ norm: \begin{equation} \label{eq:l1_pesos} \tag{$P_{1}W$}
x^{w} \in \argmin_{
\begin{array}{ll}
	\Phi x=b \\
	x \in \mathbb{R}^{n}
\end{array}
} \:\: \sum \limits_{i=1}^{n} w_{i} |x_{i}|.
\end{equation}

Problem \eqref{eq:l1_pesos} is convex and so it may be solved efficiently, although it is not always equivalent to \eqref{eq:compressed_sensing}. Note that $\ell_{1}$ minimization is obtained by using unit weights in \eqref{eq:l1_pesos}. For this particular case there are important results about its equivalence with \eqref{eq:compressed_sensing}, mainly due to Donoho \cite{donoho_2006} and Cand\`es, Romberg and Tao \cite{candes_2004}. For the general case, the task is to choose ``useful weights'' for \eqref{eq:l1_pesos}, defined as those that make $x^{w}$ be a solution of \eqref{eq:compressed_sensing}. Cand\`es, Wakin and Boyd (CWB) proposed an iterative algorithm, known as Re-Weighted $\ell_{1}$ (RW$\ell_{1}$), to estimate useful weights \cite{candes_2008}. The algorithm updates weights as follows: \begin{equation}
w_{i}^{k+1} = \frac{1}{|x_{i}^{k}|+ \epsilon_{k}}, \: \forall \: k \geq 0;
\end{equation} for some $\epsilon_{k}>0$ and with: \begin{equation} \label{eq:pesos_cwb}
x^{k} \in \argmin_{
\begin{array}{ll}
	\Phi x=b \\
	x \in \mathbb{R}^{n}
\end{array}
} \: \sum \limits_{i=1}^{n} w_{i}^{k} |x_{i}|, \: \forall \: k \geq 0.
\end{equation}

In this work we propose a new methodology to estimate weights, based on the theory of Lagrange duality. Using this methodology, together with an algorithm for estimating solutions from a dual problem, we obtain a new RW$\ell_{1}$ algorithm. The methodology is also applied to a noisy linear system, obtaining in this case a Re-Weighted LASSO algorithm (RW-LASSO).

The rest of the paper is organized as follows: Section \ref{sec:oracle} introduces the proposed methodology in the oracle case, in which a solution of \eqref{eq:compressed_sensing} is known. Here an oracle dual problem is obtained. Section \ref{sec:dual_soluciones} describes some solutions of this dual problem. In Section \ref{sec:rwl1_subgradiente} a new RW$\ell_{1}$ algorithm is obtained by applying the proposed methodology with the subgradient algorithm. Section \ref{sec:oracle_sin} extends the methodology and the RW$\ell_{1}$ subgradient algorithm to the non-oracle case, in which no solution of \eqref{eq:compressed_sensing} is known. Section \ref{sec:ruido} generalizes the methodology for the case in which the linear system is affected by noise. Here a RW-LASSO algorithm is obtained. Section \ref{sec:resultados} analices the performance of the proposed RW$\ell_{1}$ algorithm in the noiseless case, and the RW-LASSO algorithm in the noisy case, both applied to random linear systems. Section \ref{sec:conclusiones} gives the final conclusions.

\section{Methodology with Oracle} \label{sec:oracle}

The proposed methodology is introduced in the ideal case in which a solution $x^{*}$ of \eqref{eq:compressed_sensing} is known. Consider the ideal primal problem defined as: \begin{equation} \label{eq:primal_ideal} \tag{$P$}
\argmin_{
\begin{array}{ll}
	\Phi x=b \\
	|x_{i}| \leq |x^{*}_{i}|,\: \forall i
\end{array}
} \:\ 0.
\end{equation} This is a convex problem, so it can be solved efficiently. Also, any solution of \eqref{eq:primal_ideal} is a solution of \eqref{eq:compressed_sensing}. Of course \eqref{eq:primal_ideal} is ideal, since $x^{*}$ is assumed to be known, so it has no practical value. Consider the Lagrange relaxation obtained by relaxing only the constraints involving $x^{*}$. The associated Lagrangian is: \begin{equation}
L(x,w) = \sum \limits_{i=1}^{n} w_{i} \left( |x_{i}| - |x^{*}_{i}| \right) = \sum \limits_{i=1}^{n} w_{i} |x_{i}| - \sum \limits_{i=1}^{n} w_{i} |x^{*}_{i}|,
\end{equation} where $w_{i} \geq 0$ are the Lagrange multipliers. The dual function is then: \begin{equation}
d(w) := \min_{
\begin{array}{ll}
	\Phi x=b \\
	x \in \mathbb{R}^{n}
\end{array}
} \:\: L \left( w, x \right) = \left( \min_{
\begin{array}{ll}
	\Phi x=b \\
	x \in \mathbb{R}^{n}
\end{array}
} \:\: \sum \limits_{i=1}^{n} w_{i} |x_{i}| \right) - \sum \limits_{i=1}^{n} w_{i} |x^{*}_{i}|.
\end{equation} This dual function involves a Weighted $\ell_{1}$ problem, in which weights are Lagrange multipliers. This is the key idea behind the proposed methodology: identify weights of \eqref{eq:l1_pesos} as Lagrange multipliers. The problem is then in the context of Lagrange duality. In particular, weights may be estimated by any algorithm to estimate multipliers. Equivalently, weights may be estimated as solutions of the dual problem, given by: \begin{equation} \label{eq:dual_ideal_relajado} \tag{$D$}
\argmax_{
\begin{array}{ll}
	w \geq 0 \\
	w \in \mathbb{R}^{n}
\end{array}
} \: d(w).
\end{equation} This maximization problem is always concave, so it may be solved efficiently. One drawback is that usually the dual function is non differentiable, so for example gradient based algorithms must be replaced by subgradient.

\section{Solutions of the Dual Problem} \label{sec:dual_soluciones}

Now the interest is to find ``useful solutions'' of the dual \eqref{eq:dual_ideal_relajado}. That is: $w \geq 0$ such that $x^{w}$ is a solution of \eqref{eq:compressed_sensing}. This section shows that such solutions always exist, although not every solution of \eqref{eq:dual_ideal_relajado} has this property.

\begin{proposition}
Primal problem \eqref{eq:primal_ideal} satisfies strong duality: $d^{*}=f^{*}$.
\end{proposition}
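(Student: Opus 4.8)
The plan is to show directly that both the primal and dual optimal values equal zero, which makes strong duality almost immediate and sidesteps any strict-feasibility hypothesis. First I would record that the primal value is $f^{*}=0$: the oracle point $x^{*}$ is feasible for \eqref{eq:primal_ideal}, since it satisfies $\Phi x^{*}=b$ and trivially $|x^{*}_{i}|\leq|x^{*}_{i}|$, so the feasible set is nonempty; as the objective is the constant $0$, its minimum over this set is $0$.

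Next I would establish the two inequalities that sandwich $d^{*}$. For the upper bound I invoke weak duality in sharp form: for any dual-feasible $w\geq 0$ and any primal-feasible $\bar{x}$, one has $d(w)\leq L(\bar{x},w)=\sum_{i} w_{i}\left(|\bar{x}_{i}|-|x^{*}_{i}|\right)\leq 0$, because each factor $w_{i}\geq 0$ multiplies a nonpositive quantity $|\bar{x}_{i}|-|x^{*}_{i}|\leq 0$; taking the supremum over $w$ gives $d^{*}\leq 0=f^{*}$. For the matching lower bound I simply evaluate the dual function at $w=0$: since $\{x:\Phi x=b\}$ is nonempty, $d(0)=\min_{\Phi x=b} 0 = 0$, and as $w=0$ is dual-feasible this yields $d^{*}\geq d(0)=0$. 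Combining the two bounds gives $d^{*}=0=f^{*}$, which is the claim.

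The step where one must be careful — and the reason I prefer the sandwich argument — is the would-be appeal to Slater's condition. The natural instinct is to prove strong duality for the convex problem \eqref{eq:primal_ideal} by exhibiting a \emph{strictly} feasible point, but this fails precisely at the coordinates where $x^{*}_{i}=0$, since $|x_{i}|<0$ is impossible. The correct remedy is to note that every inequality constraint $|x_{i}|\leq|x^{*}_{i}|$ is polyhedral, being equivalent to the pair of affine constraints $-|x^{*}_{i}|\leq x_{i}\leq|x^{*}_{i}|$, so the refined Slater condition for affine constraints applies and mere feasibility of \eqref{eq:primal_ideal} suffices. This recovers the same conclusion, but the direct computation above is cleaner because the constant objective lets $w=0$ act as an explicit dual optimizer.
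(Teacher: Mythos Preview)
Your argument is correct and follows exactly the paper's approach: note $f^{*}=0$, use weak duality for $d^{*}\leq 0$, and exhibit $w=\vec{0}$ with $d(\vec{0})=0$ to get $d^{*}\geq 0$. The additional paragraph on Slater's condition and polyhedrality is a nice aside but is not needed for the proof the paper gives.
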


\begin{proof}
The primal optimal value is clearly $f^{*}=0$. By weak duality: $d^{*} \leq f^{*}=0$. So it suffices to show that $d(w)=0$, for some $w \geq 0$. Taking $w=\vec{0}$: \begin{equation}
d(\vec{0}) = \left( \min_{
\begin{array}{ll}
	\Phi x=b \\
	x \in \mathbb{R}^{n}
\end{array}
} \:\: \sum \limits_{i=1}^{n} 0 |x_{i}| \right) - \sum \limits_{i=1}^{n} 0 |x^{*}_{i}|=0.
\end{equation}
\qed
\end{proof}

It was also shown that $w=\vec{0}$ is a solution of \eqref{eq:dual_ideal_relajado}. Clearly $w = \vec{0}$ is not necessarily a useful solution, since $x^{\vec{0}}$ could be any solution of the linear system: $$x^{\vec{0}} \in \argmin_{ \begin{array}{ll}
	\Phi x=b \\
	x \in \mathbb{R}^{n}
\end{array}
} \:\: \sum \limits_{i=1}^{n} 0 |x_{i}| = \{ \Phi x=b \}.$$

A consequence of strong duality is that the set of Lagrange multipliers and of dual solutions are equal. Therefore, useful weights may be estimated as dual solutions. The following result shows that the dual problem always admits useful weights as solutions.

\begin{proposition}
Let $\hat{w} \geq 0$ such that: $\hat{w}_{i}=0 \Leftrightarrow x_{i}^{*} \neq 0$. Then every solution $x^{\hat{w}}$ of the problem \eqref{eq:l1_pesos} associated to $\hat{w}$, is a solution of \eqref{eq:compressed_sensing}.
\end{proposition}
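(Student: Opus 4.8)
The plan is to exploit the fact that the biconditional condition on $\hat{w}$ forces the weighted objective to vanish exactly on the support of $x^{*}$. First I would set $S = \{i : x^{*}_{i} \neq 0\}$ and restate the hypothesis in the contrapositive form that is actually useful: $\hat{w}_{i} = 0$ for every $i \in S$, and $\hat{w}_{i} > 0$ for every $i \notin S$ (recall $\hat{w} \geq 0$, so ``$\hat{w}_{i} \neq 0$'' means ``$\hat{w}_{i} > 0$''). With this reading, the weighted objective collapses to $\sum_{i \notin S} \hat{w}_{i} |x_{i}|$, since the support coordinates carry zero weight.

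Next I would pin down the optimal value of \eqref{eq:l1_pesos} associated to $\hat{w}$. Because $x^{*}$ is feasible, $\Phi x^{*} = b$, and vanishes off $S$, it attains objective value $\sum_{i \notin S} \hat{w}_{i} |x^{*}_{i}| = 0$. As the objective is a sum of non-negative terms, $0$ is also a lower bound, so the optimal value equals $0$ and $x^{*}$ is itself optimal for \eqref{eq:l1_pesos}.

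Then, for an arbitrary solution $x^{\hat{w}}$, its objective value must also equal $0$, i.e. $\sum_{i \notin S} \hat{w}_{i} |x^{\hat{w}}_{i}| = 0$. Since every summand is non-negative and $\hat{w}_{i} > 0$ precisely for $i \notin S$, each term must vanish individually, forcing $x^{\hat{w}}_{i} = 0$ for all $i \notin S$. Hence the support of $x^{\hat{w}}$ is contained in $S$, which gives $||x^{\hat{w}}||_{0} \leq |S| = ||x^{*}||_{0}$.

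Finally I would close the argument by optimality of $x^{*}$ for the original problem: $x^{\hat{w}}$ satisfies $\Phi x^{\hat{w}} = b$, so it is feasible for \eqref{eq:compressed_sensing}, and since $x^{*}$ solves \eqref{eq:compressed_sensing} we also have $||x^{\hat{w}}||_{0} \geq ||x^{*}||_{0}$. Combining the two inequalities yields $||x^{\hat{w}}||_{0} = ||x^{*}||_{0}$, so $x^{\hat{w}}$ is indeed a solution of \eqref{eq:compressed_sensing}. The argument is short and requires no duality machinery; the only genuinely delicate point is parsing the biconditional correctly so that the strictly positive weights land exactly on the off-support coordinates. That alignment is the crux: it is what converts ``zero objective value'' into ``zero off-support entries'', and everything else is a two-sided comparison of $\ell_{0}$ values.
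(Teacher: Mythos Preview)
Your argument is correct and mirrors the paper's proof almost exactly: the paper works with the zero set $I=\{i:x^{*}_{i}=0\}$ (your $S^{c}$), sandwiches the weighted objective between $0$ and $\sum_{i\in I}\hat w_{i}|x^{*}_{i}|=0$ via feasibility and optimality of $x^{*}$, and then uses $\hat w_{i}>0$ on $I$ to force $x^{\hat w}_{i}=0$ there. The only difference is cosmetic---you additionally derive the reverse inequality $\|x^{\hat w}\|_{0}\geq\|x^{*}\|_{0}$ to get equality, whereas the paper stops at $\|x^{\hat w}\|_{0}\leq\|x^{*}\|_{0}$ plus feasibility, which already suffices for optimality in \eqref{eq:compressed_sensing}.
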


\begin{proof}
Let $I = \{ i \: / \: x_{i}^{*}=0 \}$. By definition of $\hat{w}$ and $x^{\hat{w}}$, and using that $\Phi x^{*} = b$: \begin{equation} 0 \leq \sum \limits_{i \in I}^{} \hat{w}_{i}|x^{\hat{w}}_{i}| = \sum \limits_{i=1}^{n} \hat{w}_{i}|x^{\hat{w}}_{i}| \leq \sum \limits_{i=1}^{n} \hat{w}_{i}|x^{*}_{i}| =  \sum \limits_{i \in I}^{} \hat{w}_{i}|x^{*}_{i}| = 0. \end{equation} This implies: $\hat{w}_{i}|x^{\hat{w}}_{i}| = 0, \: \forall \: i \in I$. Since $\hat{w}_{i} > 0, \: \forall \: i \in I$, then we must have: $x^{\hat{w}}_{i} = 0, \: \forall \: i \in I$. So: $||x^{\hat{w}}||_{0} \leq ||x^{*}||_{0}$. By definition $\Phi x^{\hat{w}}=b$, then it solves \eqref{eq:compressed_sensing}.
\qed
\end{proof}

\section{RW$\ell_{1}$ with Projected Subgradient Algorithm} \label{sec:rwl1_subgradiente}

In this section we give an implementation of the proposed methodology, by using the projected subgradient algorithm for estimating solutions of the dual problem. This algorithm may be thought as a (sub)gradient ``ascent'', with a projection on the dual feasible set. More specifically, starting at $w^{0} \geq 0$, the update is: \begin{equation}
\left\lbrace \begin{array}{ll}
w^{k+1} = w^{k} + \alpha_{k} g^{k}\\
w^{k+1} = \max \{ 0,w^{k+1} \} 
\end{array} \right., \forall \: k \geq 0;
\end{equation} where $g^{k} \in \partial d(w^{k})$ is a subgradient of the dual function at $w^{k}$, and $\alpha_{k} >0$ the stepsize. Although this is not strictly an ascent method, it is always possible to choose the stepsize in order to decrease the distance of $w^{k}$ to the dual solution set. A way for this is to update the stepsize as \cite{bertsekas_1999}: \begin{equation}
\alpha_{k} = \frac{d^{*}-d(w^{k})}{||g^{k}||^{2}_{2}} \geq 0, \; \forall \: k \geq 0.
\end{equation}

Applying \cite{bertsekas_2015}[Example 3.1.2] to \eqref{eq:primal_ideal}, it can be seen that a subgradient $g^{k} \in \partial d(w^{k})$ can be obtained by solving a Weighted $\ell_{1}$ problem: \begin{equation}
x^{k} \in \argmin_{
\begin{array}{ll}
	\Phi x=b \\
	x \in \mathbb{R}^{n}
\end{array}
} \:\: \sum \limits_{i=1}^{n} w_{i}^{k} |x_{i}| \Rightarrow g(x^{k}) \in \partial d(w^{k}), \: \forall \: k \geq 0.
\end{equation}

Note that the stepsize can now be written as: \begin{equation} \label{eq:paso_optimo_ideal}
\alpha_{k} = \frac{d^{*}-d(w^{k})}{||g^{k}||^{2}_{2}} = \frac{0-L(x^{k},w^{k})}{||g(x^{k})||_{2}^{2}} = - \frac{ \sum \limits_{i=1}^{n} w_{i}^{k} \left( |x_{i}^{k}| - |x^{*}| \right) }{\sum \limits_{i=1}^{n} \left( |x_{i}^{k}| - |x^{*}| \right)^{2}}, \: \forall \: k \geq 0.
\end{equation}

Algorithm \ref{alg:dual_ideal} shows a pseudocode of the proposed RW$\ell_{1}$ subgradient algorithm. \begin{algorithm}[H] 
\caption{RW$\ell_{1}$ with projected subgradient (with oracle and noise-free)} \label{alg:dual_ideal}
\begin{algorithmic}[1] 
    \REQUIRE $\Phi \in \mathbb{R}^{m \times n}$, $b \in \mathbb{R}^{m}$, $w^{0} \geq 0$, $\text{RWIter} \geq 0$
	\STATE $x^{0} \in \argmin_{
\begin{array}{ll}
	\Phi x=b \\
	x \in \mathbb{R}^{n}
\end{array}
} \:\: \sum \limits_{i=1}^{n} w^{0}_{i}|x_{i}|$ $\:\:$ \COMMENT{\eqref{eq:l1_pesos}}
	\STATE
    \STATE $k=0$
    \WHILE{$k < \text{RWIter}$}
    \STATE $g_{i}^{k}=g_{i}(x^{k})=|x_{i}^{k}|-|x_{i}^{*}|$ \COMMENT{subgradient at $w^{k}$}
    \STATE
	\STATE Choose $\alpha_{k}$ using \eqref{eq:paso_optimo_ideal}
	\STATE
	\STATE $w_{i}^{k+1} = w_{i}^{k} + \alpha_{k} g_{i}^{k}$
	\STATE $w_{i}^{k+1} = \max \left( 0, w_{i}^{k+1} \right)$
	\STATE
	\STATE $x^{k+1} \in \argmin_{
\begin{array}{ll}
	\Phi x=b \\
	x \in \mathbb{R}^{n}
\end{array}
} \:\: \sum \limits_{i=1}^{n} w_{i}^{k+1} |x_{i}|$ $\:\:$ \COMMENT{\eqref{eq:l1_pesos} with warm restart $x^{k}$}
	\STATE
	\STATE $k = k+1$
	\STATE
    \ENDWHILE
    \RETURN $x^{k}$
\end{algorithmic}
\end{algorithm}

\section{Methodology and Algorithm without Oracle} \label{sec:oracle_sin}

The proposed methodology is now extended to the practical case, in which no solution of \eqref{eq:compressed_sensing} is known. A simple way for doing this is to replace $x^{*}$ in the ideal constraints by its best known estimate $x^{k}$, ``amplified'' by some $\epsilon_{k}>0$: \begin{equation}
g_{i}^{k}(x)= |x_{i}| - \left( 1 + \epsilon_{k} \right) |x_{i}^{k}|, \: \forall \: k \geq 0;
\end{equation} where $x^{k}$ is calculated in the same way as in the oracle case: $$x^{k} \in \argmin_{
\begin{array}{ll}
	\Phi x=b \\
	x \in \mathbb{R}^{n}
\end{array}
} \:\: \sum \limits_{i=1}^{n} w_{i}^{k} |x_{i}|, \: \forall \: k \geq 0.$$

This gives specific constraints $g^{k}(\cdot)$ for each step $k$, and their respective primal problem: \begin{equation} \label{eq:primal_no_ideal} \tag{$P^{k}$}
\argmin_{
\begin{array}{ll}
	\Phi x=b \\
	|x_{i}| \leq \left( 1 + \epsilon_{k} \right) |x_{i}^{k}|, \: \forall i \\
	x \in \mathbb{R}^{n}
\end{array}
} \:\: 0.
\end{equation}

Since $x^{k}$ is always feasible at \eqref{eq:primal_no_ideal}, this problem has optimal value $f^{k}=0$. By relaxing its non-ideal constraints, a dual problem may be obtained. The Lagrange an dual functions are, respectively: \begin{equation}
L^{k}(x,w) = \sum \limits_{i=1}^{n} w_{i} |x_{i}| - \sum \limits_{i=1}^{n} w_{i} \left( 1 + \epsilon_{k} \right) |x^{k}_{i}|,
\end{equation} \begin{equation}
d^{k}(w) = \left( \min_{
\begin{array}{ll}
	\Phi x=b \\
	x \in \mathbb{R}^{n}
\end{array}
} \:\: \sum \limits_{i=1}^{n} w_{i} |x_{i}| \right) - \sum \limits_{i=1}^{n} w_{i} \left( 1 + \epsilon_{k} \right) |x^{k}_{i}|.
\end{equation} Like in the oracle case, each dual function involves a Weighted $\ell_{1}$ problem, with weights as Lagrange multipliers. This allows to extend the methodology, by estimating weights of \eqref{eq:primal_no_ideal} as Lagrange multipliers, or solving its dual problem: \begin{equation} \label{eq:dual_no_ideal} \tag{$D^{k}$}
\argmax_{
\begin{array}{ll}
	w \geq 0 \\
	w \in \mathbb{R}^{n}
\end{array}
} \: d^{k}(w).
\end{equation}

Solutions of \eqref{eq:dual_no_ideal} may be analized in a similar way as for \eqref{eq:dual_ideal_relajado}. In particular, it can be easily seen that \eqref{eq:primal_no_ideal} satisfies strong duality, with optimal values $f^{k}=d^{k}=0$. It is very useful to know the optimal value $d^{k}$ for \eqref{eq:dual_no_ideal}, in order to compute the stepsize for the subgradient algorithm, when applied to \eqref{eq:dual_no_ideal}: \begin{equation} \label{eq:paso_optimo_noideal}
\alpha_{k} = \frac{d^{k}-d^{k}(w^{k})}{||g^{k}(x^{k})||_{2}^{2}} = \frac{0-L^{k}(x^{k},w^{k})}{||g^{k}(x^{k})||_{2}^{2}} = \frac{1}{\epsilon_{k}} \frac{ \|W^{k} x^{k} \|_{1} }{\|x^{k}\|_{2}^{2}} \geq 0.
\end{equation} Algorithm \ref{alg:dual_no_ideal} shows the pseudo-code of the non-oracle RW$\ell_{1}$ method, obtained by combining the proposed methodology with the projected subgradient algorithm.

\begin{algorithm}[H] 
\caption{RW$\ell_{1}$ with projected subgradient (without oracle and noise free)} \label{alg:dual_no_ideal}
\begin{algorithmic}[1] 
    \REQUIRE $\Phi \in \mathbb{R}^{m \times n}$, $b \in \mathbb{R}^{m}$, $w^{0} \geq 0$, $\text{RWIter} \geq 0$
    \STATE
	\STATE $x^{0} \in \argmin_{
\begin{array}{ll}
	\Phi x=b \\
	x \in \mathbb{R}^{n}
\end{array}
} \:\: \sum \limits_{i=1}^{n} w^{0}_{i}|x_{i}|$ $\:\:$ \COMMENT{\eqref{eq:l1_pesos}}
	\STATE
    \STATE $k=0$
    \WHILE{$k < \text{RWIter}$}
    \STATE $g_{i}^{k}=g_{i}(x^{k})=|x_{i}^{k}|-\left( 1 + \epsilon_{k} \right) |x_{i}^{k}| = - \epsilon_{k} |x_{i}^{k}|$ \COMMENT{subgradient of $d^{k}$ at $w^{k}$}
    \STATE
	\STATE Choose $\alpha_{k}$ using \eqref{eq:paso_optimo_noideal}
	\STATE
	\STATE $w_{i}^{k+1} = \max \left( 0, w_{i}^{k} + \alpha_{k} g_{i}^{k} \right)$
	\STATE
	\STATE $x^{k+1} \in \argmin_{
\begin{array}{ll}
	\Phi x=b \\
	x \in \mathbb{R}^{n}
\end{array}
} \:\: \sum \limits_{i=1}^{n} w_{i}^{k+1} |x_{i}|$ $\:\:$ \COMMENT{\eqref{eq:l1_pesos} with warm restart $x^{k}$}
	\STATE
	\STATE $k = k+1$
	\STATE
    \ENDWHILE
    \RETURN $x^{k}$
\end{algorithmic}
\end{algorithm}

At each step of Algorithm \ref{alg:dual_no_ideal}, and before the projection, the update is: $$w_{i}^{k+1} = w_{i}^{k} + \alpha_{k} g_{i}^{k}(x^{k}) = w_{i}^{k} - \frac{ \|W^{k} x^{k} \|_{1} }{\|x^{k}\|_{2}^{2}} |x_{i}^{k}|, \: \forall \: k \geq 0;$$ so Algorithm \ref{alg:dual_no_ideal} is independent of $\epsilon_{k}>0$. We take $\epsilon_{k}=1, \: \forall \: k \geq 0$.

\section{Problem with Noise} \label{sec:ruido}

In this section we consider the case in which the linear system is affected by noise. That is: $b = \Phi x^{*} + z$, where $z$ represents the noise. The problem of interest is now: \begin{equation} \label{eq:compressed_sensing_ruido} \tag{$P_{0}^{\eta}$}
\argmin_{
\begin{array}{ll}
	\frac{1}{2} ||\Phi x-b||^{2}_{2} \leq \frac{\eta^{2}}{2}\\
	x \in \mathbb{R}^{n}
\end{array}
} \: ||x||_{0}.
\end{equation} This problem is also NP-hard in general, for any level of noise $\eta \geq 0$ \cite{foucart_2013}. Replacing the $\ell_{0}$ pseudo-norm by a weighted $\ell_{1}$ norm, we obtain a convex alternative: \begin{equation} \label{eq:l1_pesos_ruido} \tag{$P_{1}^{\eta}W$}
\argmin_{
\begin{array}{ll}
	\frac{1}{2} ||\Phi x-b||^{2}_{2} \leq \frac{\eta^{2}}{2}
\end{array}
} \: \sum \limits_{i=1}^{n} w_{i} |x_{i}|.
\end{equation} The proposed methodology is the same as in the noiseless case. Now the oracle primal problem is:
\begin{equation}
\argmin_{
\begin{array}{ll}
	\frac{1}{2} ||\Phi x-b||^{2}_{2} \leq \frac{\eta^{2}}{2}\\
	|x_{i}| \leq |x^{*}_{i}|, \: \forall \: i
\end{array}
} \: 0.
\end{equation} The Lagrangian obtained by relaxing the ideal constraints is the same as for the noiseless case. The dual function is now: \begin{equation}
d(w) = \left( \min_{
\begin{array}{ll}
	\frac{1}{2} ||\Phi x-b||^{2}_{2} \leq \frac{\eta^{2}}{2}
\end{array}
} \: \sum \limits_{i=1}^{n} w_{i} |x_{i}| \right) - \sum \limits_{i=1}^{n} w_{i} |x_{i}^{*}|.
\end{equation} This is a Weighted $\ell_{1}$ problem with quadratic constraints. Such as in the noiseless case, weights can be identified with Lagrange multipliers. So the methodology and the RW$\ell_{1}$ subgradient algorithm are the same as for the noiseless case, but replacing \eqref{eq:l1_pesos} with \eqref{eq:l1_pesos_ruido}. Going a step further, if the quadratic constraints are also relaxed, a new dual function may be obtained: \begin{equation}
d(w,\lambda) = \left( \min_{
\begin{array}{ll}
	x \in \mathbb{R}^{n}
\end{array}
} \: \frac{\lambda}{2}\|\Phi x-b\|_{2}^{2} + \sum \limits_{i=1}^{n} w_{i} |x_{i}| \right) - \left( \frac{\lambda}{2} \eta^{2} + \sum \limits_{i=1}^{n} w_{i} |x_{i}^{*}| \right).
\end{equation} This involves the well known Weighted LASSO problem, which is a simple generalization of the LASSO problem, introduced by Tibshirani in the area of statistics \cite{tibshirani_1996}. Chen, Donoho and Saunders introduced the same LASSO problem in the context of signal representation, but with the name of Basis Pursuit Denoising \cite{chen_2001}. Note that useful weights of \eqref{eq:l1_pesos_ruido} can still be estimated as part of the Lagrange multipliers; which are now $w \in \mathbb{R}^{n}_{+}$ and $\lambda \in \mathbb{R}_{+}$. When combined with the projected subgradient algorithm, this gives a RW-LASSO algorithm, in which at each step a Weighted-LASSO problem must be solved instead of \eqref{eq:l1_pesos_ruido}: \begin{equation}
x^{k} \in \argmin_{x \in \mathbb{R}^{n}} \:\: \frac{\lambda^{k}}{2}||\Phi x-b||^{2}_{2} + \sum \limits_{i=1}^{n} w_{i}^{k} |x_{i}|, \: \forall \: k \geq 0.
\end{equation}

Algorithm \ref{alg:dual_no_ideal_ruido} shows a pseudocode for the proposed subgradient RW-LASSO algorithm.

\begin{algorithm}[H] 
\caption{RW-LASSO with subgradient (without oracle and with noise)}
\label{alg:dual_no_ideal_ruido}
\begin{algorithmic}[1] 
    \REQUIRE $\Phi \in \mathbb{R}^{m \times n}$, $b \in \mathbb{R}^{m}$, $w^{0} \geq 0$, $\lambda^{0} \in \mathbb{R}$, $\eta \geq 0$, $\text{RWIter} \geq 0$
	\STATE $x^{0} \in \argmin_{x \in \mathbb{R}^{n}} \:\: \frac{\lambda^{0}}{2} \|\Phi x-b\|^{2}_{2} + \sum \limits_{i=1}^{n} w_{i}^{0} |x_{i}|$ \COMMENT{Weighted-LASSO}
	\STATE
    \STATE $k=0$
    \WHILE{$k < \text{RWIter}$}
    \STATE $g_{i}^{k}=g_{i}^{k}(x^{k})=|x_{i}^{k}|-\left( 1 + \epsilon_{k} \right) |x_{i}^{k}|$
	\STATE $w_{i}^{k+1} = \max \left( 0, w_{i}^{k} + \alpha_{k} g_{i}^{k} \right)$
    \STATE
    \STATE $g^{k}_{\lambda} = g_{\lambda}(x^{k})=\frac{1}{2} \left( \|\Phi x^{k}-b\|^{2}_{2} - \eta^{2} \right)$
	\STATE $\lambda^{k+1} = \max \left( 0, \lambda^{k} + \alpha_{k} g^{k}_{\lambda} \right)$
	\STATE
	\STATE $x^{k+1} \in \argmin_{x \in \mathbb{R}^{n}} \:\: \frac{\lambda^{k+1}}{2}||\Phi x-b||^{2}_{2} + \sum \limits_{i=1}^{n} w_{i}^{k+1} |x_{i}|$ \COMMENT{with warm restart $x^{k}$}
	\STATE
	\STATE $k = k+1$
	\STATE
    \ENDWHILE
    \RETURN $x^{k}$
\end{algorithmic}
\end{algorithm}

CWB RW$\ell_{1}$ algorithm can also be extended to the noisy model, by updating weights as in the noiseless case, but taking \cite{candes_2008}: \begin{equation}
x^{k} \in \argmin_{
\begin{array}{ll}
	\frac{1}{2} ||\Phi x-b||^{2}_{2} \leq \frac{\eta^{2}}{2}
\end{array}
} \: \sum \limits_{i=1}^{n} w_{i}^{k} |x_{i}|, \: \forall \: k \geq 0.
\end{equation}

\section{Experimental Results} \label{sec:resultados}

\subsection{Results for the Noise-free Setting}

This section analyzes the performance of the proposed RW$\ell_{1}$ subgradient algorithm, when applied to a random linear system, and taking the method by CWB as reference. For a given level of sparsity $s$, a random linear system $\Phi x=b$ is generated, with a solution $x^{*}$ such that $\|x^{*}\|_{0} \leq s$. The experimental setting is based on \cite{candes_2008}:

\begin{enumerate}
\item Generate $\Phi \in \mathbb{R}^{m \times n}$, with $n=256$, $m=100$ and Gaussian independent entries: $$\Phi_{ij} \sim N \left( 0,\sigma=\frac{1}{\sqrt{m}} \right), \: \forall \: i,j.$$ Note that in particular $\Phi$ will have normalized columns (in expected value).

\item Select randomly a set $I_{s} \subset \{ 1,\ldots,n \}$ of $s$ indexes, representing the coordinates of $x^{*}$ where non-null values are allowed.

\item Generate the values of $x^{*}_{i}, \: i \in I_{s}$, with independent Gaussian distribution: $$x^{*}_{i} \sim N \left( 0, \sigma = \frac{1}{\sqrt{s}} \right), \forall \: i \in I_{s}.$$ Note that in particular $x^{*}$ will be normalized in expected value.

\item Generate the independent term: $b=\Phi x^{*} \in \mathbb{R}^{m}$.
\end{enumerate}

For both RW algorithms, the proposed one and the method by CWB, we use $w^{0}=\vec{1}$. For CWB we take $\epsilon_{k}=0.1, \: \forall \: k \geq 0$. Following \cite{candes_2008}, we say $x^{*}$ was recovered if: \begin{equation}
\|x^{\text{RWIter}}-x^{*}\|_{\infty} \leq 1 \times 10^{-3}.
\end{equation}

For each level of sparsity $s \in [15,55]$, a recovery rate is calculated as the percentaje of recovery over $N_{p}=300$ random problems. Figure \ref{fig:performance_sin_ruido} shows the results for different number of RW iterations. Results for $\ell_{1}$ minimization are also shown for reference. Considering only one RW iteration, the proposed algorithm is slightly better than CWB. This difference disappears for two or more RW iterations, where both algorithms show the same performance; with the additional interpretability of the weights in the proposed methodology.

\begin{figure}
\centering
\includegraphics[width=0.85\textwidth]{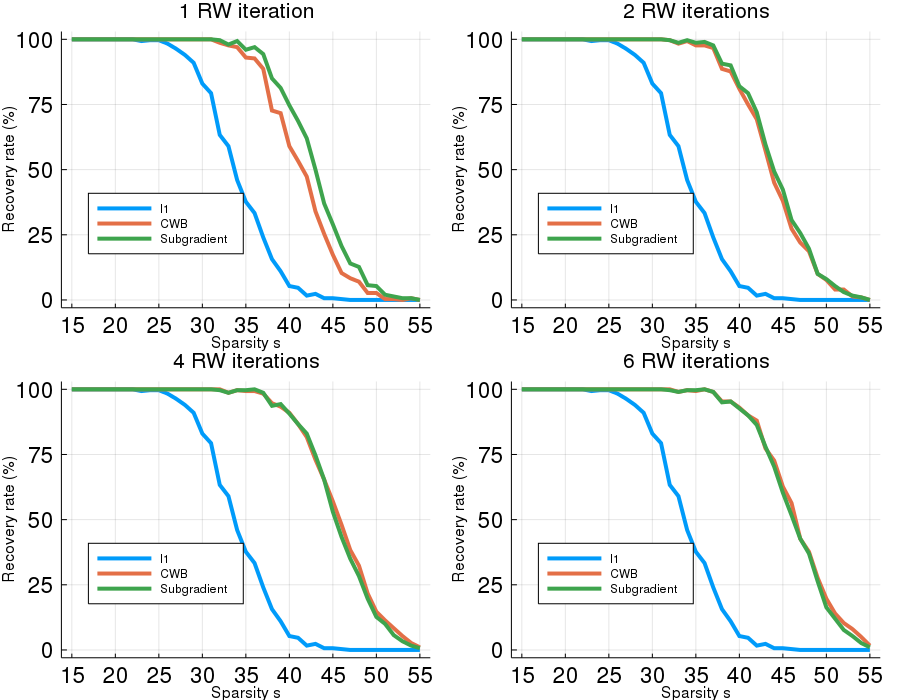}
\caption{Recovery rate of RW$\ell_{1}$ algorithms.} \label{fig:performance_sin_ruido}
\end{figure}

\subsection{Results for the Noisy Setting}

Following \cite{candes_2008}, random problems with noise are generated with $n=256$ and $m=128$. $\Phi$ and $x^{*}$ are generated in the same way as in the noiseless case. Noise $z$ in $b$ is taken with Gaussian independent coordinates, and such that $x^{*}$ is feasible with high probability. For this we take: $z_{i}=\sigma v_{i}, \: v_{i} \sim N(0,1) \text{ independent}$, so: \begin{equation}
\|z\|_{2}^{2} = \sigma^{2} \|v\|_{2}^{2} = \sigma^{2} \left( \sum \limits_{i=1}^{m} v_{i}^{2} \right) \sim \sigma^{2} \chi_{m}^{2}.
\end{equation} Taking for example $\eta^{2} = \sigma^{2} \left( m + 2 \sqrt{2m} \right)$, we have: \begin{equation}
P \left( \|\Phi x^{*}-b\|_{2}^{2} \leq \eta^{2} \right) = 1 - P \left( \chi^{2}_{128} \geq 160 \right) \simeq 0.971.
\end{equation}

We use $w^{0}=\vec{1}$ for both algorithms. For subgradient RW-LASSO we take $\lambda^{0} = \frac{n}{\|z\|_{1}}$, where $z$ is a solution of $\Phi x =b$ with minimum $\ell_{2}$ norm. FISTA algorithm is used for solving each Weighted-LASSO problem \cite{beck_2009}. Performance is measured by the improvement with respect to a solution $x_{\ell_{1}}^{\eta}$ of noisy $\ell_{1}$ minimization: \begin{equation}
a=100 \times \left( 1-\frac{||x^{RW}-x^{*}||_{2}}{||x_{\ell_{1}}^{\eta}-x^{*}||_{2}} \right) \%.
\end{equation}

Figure \ref{fig:performance_ruido} shows the performance with noisy measures for both RW methods: the proposed RW-LASSO algorithm and the RW$\ell_{1}$ CWB algorithm. Results correspond to $N_{p}=300$ tests on random problems with fixed sparsity $s=38$. The mean improvement $\bar{x}$ is also shown (vertical red line), together with $\pm$ one standard deviation $\bar{\sigma}$ (vertical violet and green lines). CWB RW$\ell_{1}$ algorithm shows a mean improvement of $21\%$ with respect to $\ell_{1}$ minimization. For RW-LASSO subgradient this improvement is $32\%$, significantly higher than CWB.

\begin{figure}
\centering
\includegraphics[width=0.75\textwidth]{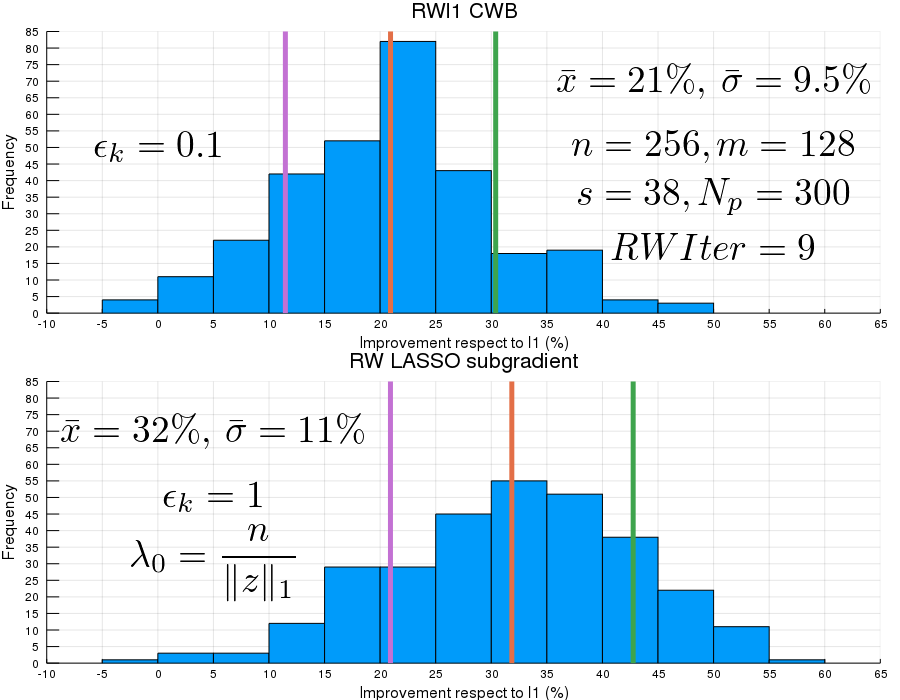}
\caption{Performance of RW algorithms with respect to $\ell_{1}$ minimization (with noise).} \label{fig:performance_ruido}
\end{figure}

We also considered the RW-LASSO algorithm with weights updated as CWB, but the performance was very poor. The reason for this may be that $\lambda_{k}$ remains fixed at $\lambda_{0}$, as there is no obvious rule for updating it.

\section{Conclusions} \label{sec:conclusiones}

In this paper the important problem of finding sparse solutions of a linear system was considered. A usual alternative to this NP-hard problem is the Weighted $\ell_{1}$ problem, where the choice of weights is crucial. A new methodology for estimating weights was proposed, based on identifying weights as solutions of a Lagrange dual problem. It was shown that this problem always admits ``useful'' solutions. The proposed methodology was then applied using the projected subgradient algorithm, obtaining a RW$\ell_{1}$ algorithm, alternative to the classical one, due to CWB. This new algorithm was tested on random problems in the noiseless case, obtaining almost the same performance as that of CWB, but allowing an interpretation of weights. The proposed methodology was then extended to the noisy case. Here a RW-LASSO algorithm was obtained, by introducing a new Lagrange multiplier. This last algorithm showed a considerable improvement in performance, with respect to the RW$\ell_{1}$ algorithm proposed by CWB.

\subsubsection*{Acknowledgment.}

This work was supported by a grant from Comisi\'on Acad\'emica de Posgrado (CAP), Universidad de la Rep\'ublica, Uruguay.

\bibliographystyle{splncs04}
\bibliography{bibliografia/biblio}

%
%
%
%
%
%
%

\end{document}